\theoremstyle{plain}
\newtheorem{thm}{Theorem}
\newtheorem{lem}[thm]{Lemma}
\newtheorem{cor}[thm]{Corollary}
\theoremstyle{definition}
\newtheorem{rem}[thm]{Remark}
\newcommand{\N}{\mathbb{N}}
\newcommand{\R}{\mathbb{R}}
\newcommand{\Z}{\mathbb{Z}}
\renewcommand{\S}{\mathcal{S}}
\newcommand{\zerovec}{o}
\newcommand{\zeromat}{\mathbb{O}}
\newcommand{\id}{\mathbbm{I}}
\newcommand{\conv}{\operatorname{conv}}
\newcommand{\xc}{\operatorname{xc}}
\newcommand{\sxc}{\operatorname{sxc}}
\newcommand{\dist}{\operatorname{dist}}
\newcommand{\prob}{\operatorname{Prob}}
\newcommand{\cP}{\mathcal{P}}
\newcommand{\setcond}[2]{\left\{ #1 \,:\, #2 \right\}}
\renewcommand{\mid}{\,:\,}
\newcommand{\ball}{\mathbb{B}}
\newcommand{\eps}{\varepsilon}
\newcommand{\transpose}[1]{{#1}^{\top}}
\newcommand{\cV}{\mathcal{V}}
\newcommand{\cB}{\mathcal{B}}
\newcommand{\floor}[1]{\left\lfloor #1 \right\rfloor}
\renewcommand{\phi}{\varphi}
\newenvironment{smallpmatrix}{\left(\begin{smallmatrix}}{\end{smallmatrix}\right)}
\title[Maximum Extension Complexity]{Maximum Semidefinite and Linear Extension Complexity of Families of Polytopes}
\author{Gennadiy Averkov$^1$}
\author{Volker Kaibel$^1$}
\author{Stefan Weltge$^2$}
\address{$^1$ Otto-von-Guericke-Universität Magdeburg, Germany}
\address{$^2$ ETH Zürich, Switzerland}
\email{averkov@ovgu.de}
\email{kaibel@ovgu.de}
\email{stefan.weltge@ifor.math.ethz.ch}
\begin{document}
    \begin{abstract}
        We relate the maximum semidefinite and linear extension complexity of a family of polytopes to the cardinality
        of this family and the minimum pairwise Hausdorff distance of its members.
        This result directly implies a known lower bound on the maximum semidefinite extension complexity of
        0/1-polytopes.
        We further show how our result can be used to improve on the corresponding bounds known for polygons with
        integer vertices.

        Our geometric proof builds upon nothing else than a simple well-known property of maximum volume inscribed
        ellipsoids of convex bodies.
        In particular, it does not rely on factorizations over the semidefinite cone and thus avoids involved procedures
        of balancing them as required, e.g., in~\cite{BrietDP15}.
        We hope that revealing the geometry behind the phenomenon opens doors for further results.

        Moreover, we show that the linear extension complexity of every $ d $-dimen\-sional 0/1-polytope is bounded from
        above by~$ O(\frac{2^d}{d}) $.
    \end{abstract}
    \maketitle
    \section{Introduction}

%
In what follows, let $d, k, \ell, m, n \in \N$.
Consider the vector space $\S^k$  of $k \times k$ symmetric real matrices and the convex cone $\S_+^k$  of
positive semidefinite matrices in $\S^k$. We consider representations
\begin{align}
\label{eqSemRep}
&P = \phi(Q), \ \text{where} 
\ Q = \setcond{x \in \R^n}{M(x) \in \S_+^k}, \ \text{and}
\\ \nonumber & \text{$\phi : \R^n \to \R^d$ and $M : \R^n \to \S^k$ are affine maps,} 
\end{align}
of sets $P \subseteq \R^d$. Note that $M(x)$ is a $k \times k$ symmetric matrix whose components are affine functions in $x$. The condition $M(x) \in \S_+^k$ is called a \emph{linear matrix inequality (LMI) of size $k$}, the set $Q$ defined by this condition is called a \emph{spectrahedron} and the affine image $P$ of $Q$ is called a \emph{projected spetrahedron}. See also \cite{HeltonNie2010,HeltonNie2012,Scheiderer2011} for a discussion of properties of spectrahedra and projected spectrahedra, and an example in Fig.~\ref{figEx}. We call \eqref{eqSemRep} an \emph{extended formulation of $P$ with an LMI of size $k$}.
If, additionally, $M(x)$ has the block-diagonal structure
\begin{align}
M(x) & = \begin{smallpmatrix} 
M_1(x) & & 
\\	& \ddots &
\\	&& M_\ell(x)
\end{smallpmatrix},
 &
\text{where $M_1,\ldots,M_\ell : \R^n \to \S^m$ and $\ell m = k$,} \label{eqBlocks}
\end{align}
then the LMI $M(x) \in \S_+^k$ can be reformulated as a system $M_1(x),\ldots,M_\ell(x) \in \S_+^m$ of $\ell$ LMIs. We call \eqref{eqSemRep}--\eqref{eqBlocks} an \emph{extended formulation of $P$ with $\ell$ LMIs of size $m$.} If $m=1$, the constraints $M_1(x),\ldots, M_\ell \in \S_+^m $ are merely linear inequalities and so the sets $Q$ and $P$ are polyhedra. We call  the representation \eqref{eqSemRep}--\eqref{eqBlocks} with $m=1$ an \emph{extended formulation of the polyhedron $P$ with $\ell$ linear inequalities}.

\setlength{\unitlength}{1mm}
\begin{figure}
	\label{figEx}
	\begin{picture}(80,55)
	\put(-30,0){\includegraphics[width=80mm]{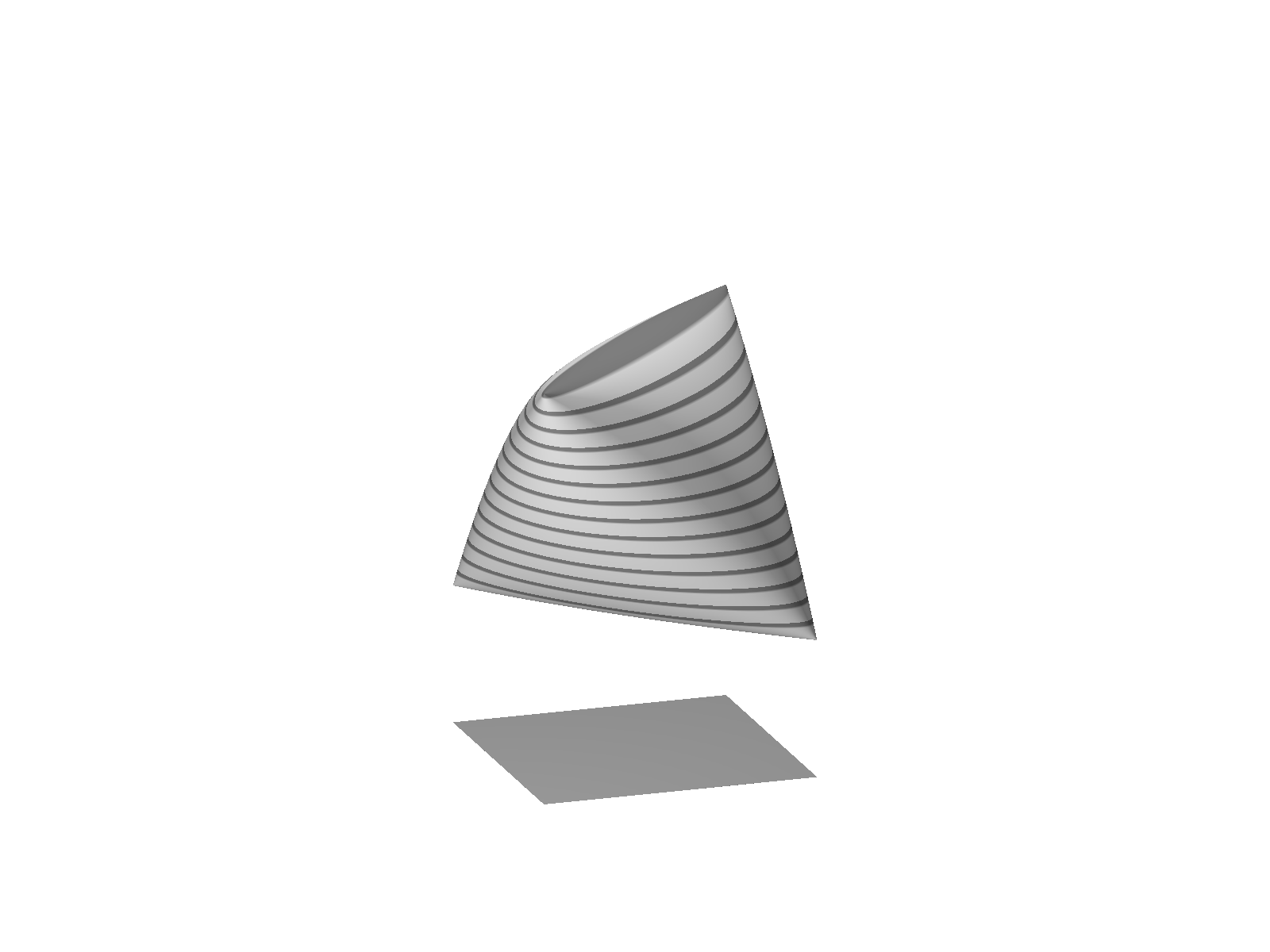}}
	\put(25,30){ $Q = \setcond{x \in \R^3}{\begin{smallpmatrix} 1 & x_1 & x_2 \\ x_1 & 1 & x_3 \\ x_2 & x_3 & 1 \end{smallpmatrix} \in \S_+^3}$}
	\end{picture}
	\caption{The orthogonal projection of the depicted spectrahedron $Q$ onto a horizontal plane is a square. Thus, a square has an extended formulation with an LMI of size $3$.}
\end{figure}

As the problem of optimizing a linear function over $ P $ can be converted into optimizing a linear function over $ Q$,
the extended formulation \eqref{eqSemRep} may be of advantage if $ Q$ has a simpler description than the original description of $ P $.
Thus, one is interested in finding small linear and semidefinite formulations of $P$. 

Since polytopes are of particular importance in discrete optimization and since our motivation originated from this area, we concentrate on the case that $P$ is a polytope. In this case, we call the smallest $k$ such that $P$ has an extended formulation with an LMI of size $k$ the \emph{semidefinite extension complexity} of $P$ and denote this value by $\sxc(P)$. 
Similarly, the smallest $\ell$ such that $P$ has an extended formulation with $\ell$ linear inequalities is called the \emph{linear extension complexity} of $P$ and is
denoted by $\xc(P)$. If $P$ is empty or a single point, we let $\sxc(P)=\xc(P)=0$. Note that $\sxc(P) \le \xc(P)$. 
For more information and examples, we refer to the surveys of Kaibel~\cite{Kaibel11}, Conforti, Cornuéjols \&
Zambelli~\cite{ConfortiCZ13}, Gouveia, Parrilo \& Thomas~\cite{GouveiaPT13} and Fawzi, Gouveia, Parrilo, Robinson \& Thomas \cite{FGPRT2015}.

Semidefinite and linear extension complexities of various \emph{specific} polytopes arising in
optimization have been extensively studied; see, e.g.,
\cite{Goemans14,FioriniMPTdW12,KaibelW15,PokuttaV13,AvisT15,Rothvoss14,LeeRS15}.
For understanding the power of extended formulations in general, it is also interesting to provide bounds on
extension complexities for \emph{families} of polytopes.
First results of this type were obtained by Rothvoß~\cite{Rothvoss13} and Fiorini, Rothvoß \&
Tiwary~\cite{FioriniRT12} who established lower bounds on the maximum linear extension complexity of $ 0/1 $-polytopes
and convex $ n $-gons, respectively.
Later, their results were carried over to the semidefinite
case by Bri{\"e}t, Dadush \& Pokutta~\cite{BrietDP15}.
As all these bounds are obtained by counting arguments, they are remarkable in the sense that no specific polytopes attaining the respective bounds are known so far.

While the approaches in~\cite{Rothvoss13,FioriniRT12,BrietDP15} can be applied to further families of polytopes, it
seems that, dealing with a new family, one is forced to repeat large parts of the argumentation in the above sources. In contrast, in this paper, we present a theorem which can be used as a simple tool for finding lower bounds on the maximum semidefinite and  linear extension
complexity for general families of polytopes.

\newcommand{\INF}{\operatorname{\vphantom{sup}\inf}\limits}

In $\R^d$, we consider the standard Euclidean norm $ \|\,.\,\| $ and the $d$-dimensional unit ball $ \ball^d := \{ x \in \R^d : \|x\| \le 1 \} $.
Given non-empty compact sets $ X,Y \subseteq \R^d $, their \emph{Hausdorff distance} with respect to the Euclidean norm is defined by
\[
    \dist(X,Y) := \max \Bigl\{ \, \sup_{x \in X} \INF\limits_{y \in Y} \|x-y\| \, , \,
    \sup_{y \in Y} \INF\limits_{x \in X} \|x-y\| \, \Bigr\}.
\]
We use $\log$ to denote the logarithm to the base $2$. 

\begin{thm}
	\label{thmMain}
	Let $\cP$ be a family of polytopes in $\R^d$ of dimensions at least one with $2 \le |\cP| < \infty$ such that each $P
	\in \cP$ has an extended formulation with $\ell$ LMIs of size $m$. Let $\rho>0$ and $\Delta>0$
	be such that each $P \in \cP$ is contained in the ball $\rho \ball^d$ and, for every two distinct polytopes $P \in \cP$ and
	$P' \in \cP$, one has $\dist(P,P') \ge \Delta$.
	Then 
	\begin{equation}
		\label{eqMainBound}
		\ell^2 m^4 \ge \frac{\log |\cP| }{8 d \left(1 + \log (2\rho/\Delta) + \log \log |\cP| \right) } =: B.
	\end{equation}
	In particular, we have
	\begin{align*}
		& \max_{P \in \cP} \sxc(P) \ge \sqrt[4]{B} & &\text{and} & \max_{P \in \cP} \xc(P) \ge \sqrt{B}.
	\end{align*}
\end{thm}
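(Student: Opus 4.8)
The plan is to recast the claim as an upper bound on $\card{\cP}$: since any two distinct members of $\cP$ are at Hausdorff distance at least $\Delta$, it suffices to produce a finite list of ``normalized data'', of length at most $\bigl(\mathrm{poly}(\ell,m)\cdot\rho/\Delta\bigr)^{O(d\ell^2m^4)}$, such that every $P\in\cP$ is determined, up to Hausdorff distance $<\Delta$, by one item of the list. Taking logarithms and performing a short case distinction then yields exactly \eqref{eqMainBound}: if $\ell^2m^4>\log\card{\cP}$ the desired inequality is immediate because $B\le\log\card{\cP}$, while if $\ell^2m^4\le\log\card{\cP}$ one has $\log(\ell m)\le\log\log\card{\cP}$, which absorbs the polynomial factors into the term $1+\log(2\rho/\Delta)+\log\log\card{\cP}$. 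By rescaling $\R^d$ we may assume $\rho=1$, and since two distinct polytopes in $\rho\ball^d$ have Hausdorff distance at most $2\rho$ we have $\log(2\rho/\Delta)\ge0$.

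Fix $P\in\cP$ with an extended formulation $P=\phi(Q)$, $Q=\{x\in\R^n:M_1(x),\dots,M_\ell(x)\psd\}$. After standard preliminary reductions — passing to $\aff(P)$; discarding redundant variables so that $P$ depends only on the affine subspace spanned by the image of $x\mapsto(\phi(x),M_1(x),\dots,M_\ell(x))$, and quotienting out the lineality of $Q$, so that the joint linear part of $(M_1,\dots,M_\ell)$ is injective and hence $n\le\ell\binom{m+1}{2}$ and $\dim P\le n$; and intersecting $Q$ with a halfspace $\{c^\top x\le t\}$ transversal to $\recc(Q)$, which makes $Q$ bounded without changing $P$ and adds $O(n)$ more recorded numbers — one is reduced to a \emph{bounded}, full-dimensional spectrahedron $Q\subseteq\R^n$ with $n=O(\ell m^2)$ and ambient dimension $d'=\dim P=O(\ell m^2)$, at the price of also recording the position of $\aff(P)$ inside the original ball ($O(d'd)=O(d\,\ell m^2)$ coordinates of absolute value $O(1)$). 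Now comes the only geometric input: mapping the maximum-volume inscribed ellipsoid of $Q$ onto $\ball^n$ by an invertible affine change of coordinates on $\R^n$ — which does not change $P$ — we may assume $\ball^n\subseteq Q\subseteq n\ball^n$, the second inclusion being the well-known property of the John ellipsoid. As $0$ is then interior to $Q$, each $M_j(0)$ is positive definite, and conjugating the $j$-th block by $M_j(0)^{-1/2}$ (again changing neither $Q$ nor $P$) we may assume $M_j(x)=\id+\sum_{i=1}^n x_iA_i^{(j)}$. Evaluating at $x=\pm\unitvec_i\in\ball^n\subseteq Q$ forces $-\id\preccurlyeq A_i^{(j)}\preccurlyeq\id$, so every coefficient matrix of the LMIs has operator norm $\le1$; and since $\phi(\ball^n)\subseteq P\subseteq\ball^{d'}$, all coefficients of $\phi$ are $O(1)$. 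Thus the entire formulation, together with the record of $\aff(P)$, is encoded by $O(d\ell m^2)+O\!\bigl((n+1)(d'+\ell\binom{m+1}{2})\bigr)=O(d\ell^2m^4)$ real numbers lying in a fixed bounded box.

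The crucial quantitative step is that these numbers control $P$ in Hausdorff distance. If the coefficient matrices are perturbed to $\widetilde A_i^{(j)}$ with $\lVert A_i^{(j)}-\widetilde A_i^{(j)}\rVert\le\eta$, then for $x\in Q$ one has $\widetilde M_j(x)=M_j(x)+\sum_i x_i(\widetilde A_i^{(j)}-A_i^{(j)})\succcurlyeq -n^{3/2}\eta\,\id$, and scaling $x$ towards the origin shows $\tfrac1{1+O(n^{3/2}\eta)}\,Q\subseteq\widetilde Q$; the same holds for the perturbed halfspace because $t\ge1$ (as $\ball^n\subseteq\{c^\top x\le t\}$). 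Applying the affine map $\phi$ — which turns a homothety of $Q$ towards its interior point $0$ into a homothety of $P=\phi(Q)$ — together with the comparably small perturbations of $\phi$ itself and of the recorded frame of $\aff(P)$, one concludes that rounding all recorded numbers to a common grid of mesh $\delta:=\Delta/\mathrm{poly}(\ell,m)$ changes $P$ by less than $\Delta$ in Hausdorff distance; note this argument does \emph{not} need $\widetilde Q$ to be bounded. Hence the number of grid points in the box, at most $\bigl(\mathrm{poly}(\ell,m)\cdot2\rho/\Delta\bigr)^{O(d\ell^2m^4)}$, bounds $\card{\cP}$, and the reformulation of the first paragraph completes the proof. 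The assertions about $\sxc$ and $\xc$ are the specializations $\ell=1$ (so $m$ is the LMI size) and $m=1$ (so $\ell$ is the number of linear inequalities).

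The main obstacle is exactly the stability estimate above: the map sending an LMI to its spectrahedron is not Lipschitz in general — it degenerates at the boundary of the positive semidefinite cone — and in the factorization-based approach of~\cite{BrietDP15} this forces an intricate balancing of semidefinite factorizations. The single normalization $\ball^n\subseteq Q\subseteq n\ball^n$ coming from the John ellipsoid replaces that machinery and makes the estimate uniform; the delicate bookkeeping is to arrange the preliminary reductions so that this normalization is available and so that the final exponent is $O(d\ell^2m^4)$ rather than a larger power — in particular, keeping the dimension reduction $n,\dim P=O(\ell m^2)$ tight and tracking the constants absorbed into the factor $8$.
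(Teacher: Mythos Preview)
Your approach coincides with the paper's: normalize each formulation via the John ellipsoid so that $\ball^n\subseteq Q\subseteq n\ball^n$, derive uniform coefficient bounds and a Hausdorff stability estimate (the paper's Lemma~\ref{lemBounds}), then count parameters --- you round to a grid, the paper does a volume/packing argument in the parameter space $\cV^{\ell,m,n}_d$, and the final case split $\ell^2m^4\gtrless\log\card{\cP}$ is identical.

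There is one genuine slip. From ``$0$ is interior to $Q$'' you deduce ``each $M_j(0)$ is positive definite'' and then conjugate by $M_j(0)^{-1/2}$; but this inference is false in general (take $n=1$, $M(x)=\begin{smallpmatrix}1+x&0\\0&0\end{smallpmatrix}$: then $Q=[-1,\infty)$, the origin is interior, yet $M(0)$ is singular). The paper closes exactly this gap with the Helton--Vinnikov lemma (stated just before Theorem~\ref{thmNormalized}): if $0$ is interior to $Q$ then the kernel of $M_j(0)$ is contained in the kernel of every $M_j(x)$, so after a congruence the singular part splits off and can be replaced by an identity block without changing $Q$ or the block size $m$. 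With this fix your outline goes through; your boundedness reduction via a single halfspace transversal to the pointed recession cone (after quotienting out lineality and taking $t$ large enough to catch a preimage of each vertex of $P$) is a valid alternative to the paper's inductive Lemma~\ref{lemBounded}.
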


For deriving lower bounds on extension complexities for a concrete family $\cP$, it
suffices to choose $\rho$ and $\Delta$ appropriately, to bound $\log |\cP|$ in the enumerator from below and to bound $\log \log |\cP|$ in the denominator from above. 
In Section~\ref{secApplications} we demonstrate how the mentioned results in~\cite{Rothvoss13,FioriniRT12,BrietDP15} can
be easily derived in this way. 

Besides the simple applicability of Theorem~\ref{thmMain}, we view its short and simple proof as an essential
contribution.
Note that the original proofs in~\cite{Rothvoss13,FioriniRT12,BrietDP15} turn out to be quite long and require a number of
non-trivial tools.
They rely on a counting argument developed in~\cite{Rothvoss13} based on encoding extended formulations by certain kinds
of factorizations of slack matrices of polytopes (see~\cite{Yannakakis91,Kaibel11,GouveiaPT13}) whose components have
to be carefully balanced and rounded. This requires several technical steps.
In contrast, our geometric proof of Theorem~\ref{thmMain} builds upon nothing else than a well-known property of
maximum-volume inscribed ellipsoids of convex bodies and simple linear algebra.

We give a short overview of the results in \cite{Rothvoss13}, \cite{FioriniRT12} and \cite{BrietDP15}, which we
reprove in our paper.
Rothvoß~\cite{Rothvoss13} proved that the maximum linear extension complexity of a $ 0/1 $-polytope in $ \R^d $ is
exponential in $ d $.
Recall that a \emph{$0/1$-polytope} in $\R^d$ is the convex hull of a subset of $\{0,1\}^d$.
Bri{\"e}t, Dadush \& Pokutta~\cite{BrietDP15} improved on this result by showing that even the maximum semidefinite
extension complexity of $ 0/1 $-polytopes in $ \R^d $ is exponential in $ d $.
The authors in~\cite{BrietDP15} mention that their arguments actually imply that the vast majority of $ 0/1 $-polytopes
in $ \R^d $ have semidefinite extension complexities that are exponential in $ d $.
In Corollary~\ref{corRandomZeroOnePolytopes} we give an explicit formulation and proof of this fact.

While $ 0/1 $-polytopes are ubiquituous in optimization, the interest in the family of $ n $-gons (i.e., two-dimensional
polytopes with $ n $ vertices) stems from the fact that, despite their trivial facial structure, the exact
asymptotics of the maximum extension complexity of $ n $-gons is not known (both in the linear and the semidefinite case).
It is known that the maximum linear extension complexity of $n$-gons is sublinear in $n$; see Shitov~\cite{Shitov14}. On the other hand,  
Fiorini, Rothvoß \& Tiwary~\cite{FioriniRT12} provided a lower bound of order $\Omega(\sqrt{n})$.
Recently, it was shown that this bound is also achieved for $n$-gons with vertices in $\Z^2$; see Padrol~\cite{Padrol2016}.
Bri{\"e}t, Dadush \& Pokutta~\cite{BrietDP15} showed that the maximum semidefinite extension complexity of
$n$-gons with vertices in $\Z^2$ is of order $\Omega(\sqrt[4]{n/\log n})$.
Using Theorem~\ref{thmMain} we give a simple proof of the fact that among polygons with vertices in $ \Z^2
$ there exist $n$-gons with linear extension complexity of order $ \Omega(\sqrt{n}) $ as well as $n$-gons with semidefinite extension complexity
of order $ \Omega(\sqrt[4]{n}) $, thus slightly improving on the previously known bounds; see Corollary~\ref{corNGons}.

Finally, we conclude our paper by giving an upper bound on linear extension complexities of $ 0/1 $-polytopes.
It is known that the extension complexity of a polytope is bounded by the number of its vertices.
Thus, the linear extension complexity of a $ 0/1 $-polytope in $ \R^d $ is of order $ O(2^d) $.
Surprisingly, no other bound than this trivial one seems to have been available so far.
For this reason, we show that the linear extension complexity of every $0/1$ polytope in $\R^d$ is at most $\frac{9}{d} 
2^d$ if $ d \ge 4$; see Section~\ref{secUpperBound}.

\subsubsection*{Notation}
Let $\N:=\{1,2,3,\ldots\}$. Throughout the paper, $d,k,\ell,m,n \in \N$. We define $ [n] := \{1,\dotsc,n\} $. 
The identity matrix of size $k \times k$ is denoted by $\id_k$. If the size of the identity matrix is clear from the context, we omit the subscript and write $\id$. Zero vectors are denoted by $\zerovec$ , while zero matrices are denoted by $\zeromat$. Their sizes will be clear from the context.

    \section{Normalization of extended formulations}

We call the extended formulation \eqref{eqSemRep} \emph{normalized} if $M(\zerovec)=\id$ and $\ball^n \subseteq Q \subseteq n \ball^n$. In this section, we show that extended formulations of a polytope can be converted into a normalized form.

\begin{lem}
    \label{lemBounded}
    Let $ C \subseteq \R^n $ be a closed convex set and $ \phi : \R^n \rightarrow \R^d $ be an affine map such that $
    P := \phi(C) $ is a polytope.
    Then there is an affine subspace $ L $  of $ \R^n $ such that $ C \cap L $ is bounded with $ P = \phi(C \cap L)
    $.
\end{lem}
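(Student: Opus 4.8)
The statement says: given a closed convex set $C \subseteq \R^n$ and an affine map $\phi$ with $\phi(C) = P$ a polytope, we can intersect $C$ with an affine subspace $L$ so that $C \cap L$ is bounded and still surjects onto $P$. The natural strategy is to pick one preimage point for each vertex of $P$ and let $L$ be the affine hull of these finitely many points; then check that $L$ does the job.

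\begin{proof}[Proof sketch]
Let $v_1,\dots,v_r$ be the vertices of $P$. Since $P = \phi(C)$, choose for each $i$ a point $c_i \in C$ with $\phi(c_i) = v_i$, and set $L := \aff\{c_1,\dots,c_r\}$. Then $L$ is an affine subspace of $\R^n$.

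First I would check that $\phi(C \cap L) = P$. The inclusion ``$\subseteq$'' is clear. For ``$\supseteq$'', note that $C \cap L$ is convex and contains $c_1,\dots,c_r$, so it contains $\conv\{c_1,\dots,c_r\}$; applying the affine map $\phi$, its image contains $\conv\{\phi(c_1),\dots,\phi(c_r)\} = \conv\{v_1,\dots,v_r\} = P$. Hence $\phi(C \cap L) = P$.

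It remains to show that $C \cap L$ is bounded. Suppose not. Since $C \cap L$ is a closed convex set (intersection of two closed convex sets), it then contains a ray, i.e.\ there exist $p \in C \cap L$ and a nonzero $u \in \R^n$ with $p + tu \in C \cap L$ for all $t \ge 0$; in particular $u \in \recc(C \cap L) \subseteq \recc(L)$, so $u$ is parallel to $L$, meaning $u$ is a linear combination of the differences $c_i - c_j$. The key point is to derive a contradiction from $\phi(p + tu) \in P$ for all $t \ge 0$: since $P$ is a polytope, it is bounded, so the affine image $\phi(p + tu) = \phi(p) + t\,(\phi(u) - \phi(\zerovec))$ can stay in $P$ only if $\phi(u) = \phi(\zerovec)$, i.e.\ $u$ lies in the kernel of the linear part of $\phi$. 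Thus $u$ is a recession direction of $C \cap L$ lying in the linear part of $\phi$.

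The main obstacle is to rule out such a $u$: a priori $C$ could well contain directions killed by $\phi$. The fix is to choose $L$ more carefully — not merely as the affine hull of \emph{some} preimages, but so that it is ``minimal'' in a suitable sense. Concretely, I would argue as follows: among all affine subspaces $L'$ with $\phi(C \cap L') = P$ (the set is nonempty by the construction above), pick one of minimal dimension. If $C \cap L$ still contained a ray in direction $u$ with $\phi(u)$ parallel to $L$ and $u$ killed by $\phi$'s linear part, one could replace $L$ by a translate of a proper affine subspace of $L$ transverse to $u$ and still hit every vertex of $P$ — because each $c_i$ can be moved along $\linspan\{u\}$ without changing $\phi(c_i)$, staying in $C$ by convexity and closedness only if $c_i - \text{(that translate)}$ meets $C$, which is where one uses that $C$ is closed and the line $c_i + \R u$ meets $C$ in a set whose $\phi$-image is the single point $v_i$. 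This contradicts minimality of $\dim L$. Hence no such $u$ exists, so every recession direction of $C \cap L$ is killed by neither obstruction, forcing $\recc(C \cap L) = \{\zerovec\}$; a closed convex set with trivial recession cone is bounded, completing the proof.
\end{proof}
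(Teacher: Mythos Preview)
Your strategy---cut $L$ down by a hyperplane transverse to a recession direction that $\phi$ kills---is exactly the paper's; the paper packages it as induction on $n$, you package it as ``pick $L$ of minimal dimension.'' These are equivalent.

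However, the final paragraph has a genuine gap. Once you abandon your initial $L=\aff\{c_1,\dots,c_r\}$ and replace it by a minimal-dimensional $L$, the points $c_1,\dots,c_r$ you chose earlier need not lie in $L$ at all, so you cannot ``move each $c_i$ along $\linspan\{u\}$'' as written. You must first repick preimages $c_i\in C\cap L$ of the vertices (they exist because $\phi(C\cap L)=P$). Next, your justification for why the slide stays in $C$ is muddled: the phrase ``staying in $C$ by convexity and closedness only if $c_i-\text{(that translate)}$ meets $C$'' does not parse, and the sentence about ``$\phi(u)$ parallel to $L$'' is a slip. The clean argument is: since $u\in\recc(C\cap L)\subseteq\recc(C)$, each ray $c_i+\R_{\ge 0}u$ lies in $C\cap L$; choose a hyperplane $H\subset L$ transverse to $u$ with $\langle u,\cdot\rangle$ large enough that all these rays meet $H$. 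The intersection points lie in $C\cap H$ and still map to the $v_i$ (because the linear part of $\phi$ kills $u$), so $\phi(C\cap H)=P$ with $\dim H=\dim L-1$, contradicting minimality. This is precisely the paper's inductive step; your sketch has the right shape but does not supply these details correctly.
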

\begin{proof}
    We argue by induction on $ n $.
    For $ n = 1 $, the assertion is easy to verify.
    Let $ n \ge 2 $ and assume that the assertion has been verified for closed convex subsets of $\R^{n-1}$.
    If $ C \subseteq \R^n $ is bounded, the assertion is trivially fulfilled with $ L = \R^n $.
    Consider the case of unbounded $ C $.
    In this case, there exists a non-zero vector $ u \in \R^n $ such that $ x + \mu u \in C $ for every $ x \in C $ and
    every $ \mu \ge 0 $; see~\cite[Thm.~8.4]{Rockafellar70}.
    Let $ x_1,\dotsc,x_\ell $ be the vertices of $ P $ and fix points $ y_1,\ldots,y_\ell \in C $ with $ \phi(y_i) =
    x_i $ for $i \in [\ell]$.
    By the choice of $u$, for each $i \in [\ell]$, the ray $R_i:=\setcond{y_i+ \mu u}{\mu \ge 0}$ in direction $u$
    emanating from $y_i$ is a subset of $C$.
    The image $\phi(R_i)$ of the ray $ R_i $ is either a ray or a point, and since $P$ is bounded, we have $
    \phi(R_i) = \{x_i\} $.
    Choose a hyperplane $ H $ in $ \R^n $ orthogonal to $ u $ that meets all the finitely many rays $ R_1,\ldots,R_\ell
    $.
    By construction, $ \phi(C \cap H) = P $.
    The set $ C \cap H $ is a closed convex subset of $H$.
    Since $H$ can be identified with $\R^{n-1}$, the induction assumption yields the existence of an affine
    subspace $ L $ of $ H $ such that $ C \cap L $ is bounded and $ \phi(C \cap L) = P $.
\end{proof}
The following lemma follows from a basic result from the theory of convex sets.
\begin{lem}
    \label{lemSandwich}
    Let $ Q $ be a compact convex subset of $ \R^n $ with non-empty interior.
    Then there exists an affine bijection $ \phi : \R^n \rightarrow \R^n $ such that $ \ball^n \subseteq \phi(Q)
    \subseteq n \ball^n $.
\end{lem}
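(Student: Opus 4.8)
The plan is to invoke the classical theorem of F.~John on maximal-volume inscribed ellipsoids, which is the "basic result from the theory of convex sets" alluded to above. Since $Q$ is a convex body — compact, convex, with non-empty interior — the family of ellipsoids contained in $Q$ is non-empty, and a standard compactness argument shows that it contains an ellipsoid $E$ of maximal volume (uniqueness, which we do not strictly need, follows from strict concavity of $(\det\,\cdot\,)^{1/n}$ on the positive definite matrices). Write $E = c + A\ball^n$ for some $c \in \R^n$ and some invertible linear map $A$. The key geometric fact is John's theorem: if the maximal-volume ellipsoid inscribed in a convex body equals $\ball^n$, then the body is contained in $n\ball^n$.

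To put ourselves in that situation, I would take the affine bijection $\phi : \R^n \to \R^n$ defined by $\phi(x) := A^{-1}(x-c)$, so that $\phi(E) = \ball^n$. Every affine bijection multiplies the volume of each measurable set by the same constant $|\det A^{-1}| \neq 0$, and it maps ellipsoids contained in $Q$ bijectively onto ellipsoids contained in $\phi(Q)$; hence it preserves the order of volumes among these inscribed ellipsoids. Consequently $\ball^n = \phi(E)$ is the maximal-volume ellipsoid inscribed in the convex body $\phi(Q)$. In particular $\ball^n \subseteq \phi(Q)$, and John's theorem yields $\phi(Q) \subseteq n\ball^n$, which is precisely the desired sandwiching.

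The only genuinely non-trivial ingredient is John's theorem itself, which I would simply cite (for instance, from the original work of John or from Ball's survey on convex geometry); the rest of the argument is elementary bookkeeping about affine maps and volumes. Thus the main "obstacle" is merely to state the correct form of John's theorem and to record the affine-invariance of the maximal-volume-inscribed-ellipsoid property; once these are in place, the lemma is immediate.
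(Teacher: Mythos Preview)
Your argument is correct and follows essentially the same route as the paper: both invoke John's theorem on the maximal-volume inscribed ellipsoid and then apply an affine bijection sending this ellipsoid to $\ball^n$. The only cosmetic difference is that the paper quotes directly the inclusion $E-c \subseteq Q-c \subseteq n(E-c)$ for the John ellipsoid $E$ with center $c$, whereas you first normalize via $\phi$ and then apply John's theorem in the form ``if the John ellipsoid is $\ball^n$ then the body lies in $n\ball^n$''; these are equivalent formulations.
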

\begin{proof}
    Consider the so-called John-L\"owner ellipsoid $E$ of $Q$, that is, $E$ is the ellipsoid of maximum volume contained
    in $Q$.
    Let $c$ be the center of $E$.
    It is well-known that $E-c \subseteq Q - c \subseteq n (E - c)$; see, for example, \cite[Chap.~V
    Thm.~2.4]{Barvinok02}.
    Thus, one can choose $\phi$ to be an affine bijection with $\phi(E) = \ball^n$.
\end{proof}

\begin{lem}[{Helton \& Vinnikov \cite[Lem.~2.3]{HeltonVinnikov2007}}]
	Let $Q=\setcond{x \in \R^n}{M(x) \in \S_+^k}$ be a spectrahedron given by an LMI $M(x) \in \S_+^k$. If $\zerovec$ is in the interior of $Q$, then $Q$ can also be written as $Q= \setcond{x \in \R^n}{A(x) + \id \in \S_+^k}$, where $A : \R^n \to \S^k$ is a linear map. 
\end{lem}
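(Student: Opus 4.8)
The plan is to normalize the constant term of the linear matrix inequality to the identity by a congruence transformation, after first passing to the subspace orthogonal to the common kernel of the coefficient matrices. Write $M(x) = M_0 + x_1 M_1 + \dots + x_n M_n$ with $M_0,M_1,\dots,M_n \in \S^k$. Since $\zerovec \in Q$ we have $M_0 = M(\zerovec) \psd$; however $M_0$ need not be positive definite, which is why the preliminary reduction is needed.

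\emph{Step 1 (the crux).} I would first prove that $W := \ker M_0$ is contained in $\ker M(x)$ for every $x \in \R^n$, and this is the one place where the hypothesis that $\zerovec$ lies in the interior of $Q$ enters. Fix $v \in W$ and choose an open neighbourhood $U$ of $\zerovec$ with $M(x) \psd$ for all $x \in U$. The map $x \mapsto \transpose{v} M(x) v$ is affine, nonnegative on $U$, and equals $\transpose{v} M_0 v = 0$ at the interior point $\zerovec$; hence it vanishes on all of $U$ (a nonnegative affine function that vanishes at an interior point of a set on which it is defined is identically zero there, by comparing values at $\zerovec \pm tu$). Since $A \psd$ and $\transpose{v} A v = 0$ force $A v = \zerovec$, we get $M(x) v = \zerovec$ for all $x \in U$, and as $x \mapsto M(x) v$ is affine and vanishes on a nonempty open set it vanishes identically; comparing coefficients yields $M_i v = \zerovec$ for every $i \in \{0,1,\dots,n\}$. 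Thus $W$ is a common kernel of all the $M_i$.

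\emph{Step 2 and conclusion.} Choose an orthonormal basis of $\R^k$ adapted to the splitting $\R^k = W^{\perp} \oplus W$, with $W^{\perp}$ spanned by the first $r := k - \dim W$ coordinate vectors. By Step 1 each $M_i$ is block-diagonal in this basis, $M_i = \diag(\widetilde M_i, \zeromat)$ with $\widetilde M_i \in \S^r$, so $M(x) \psd$ if and only if $\widetilde M(x) := \widetilde M_0 + x_1 \widetilde M_1 + \dots + x_n \widetilde M_n \psd$; moreover $\widetilde M_0$ is positive definite, being the restriction of the positive semidefinite matrix $M_0$ to the orthogonal complement of its kernel. Put $S := \widetilde M_0^{-1/2}$ and $\widetilde A(x) := S\bigl(x_1 \widetilde M_1 + \dots + x_n \widetilde M_n\bigr)S$, a linear map $\R^n \to \S^r$; since $S$ is invertible, $\widetilde M(x) \psd$ iff $S \widetilde M(x) S = \id_r + \widetilde A(x) \psd$. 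Finally, set $A(x) := \diag\bigl(\widetilde A(x), \zeromat\bigr) \in \S^k$, which is linear; then $A(x) + \id = \diag\bigl(\id_r + \widetilde A(x), \id\bigr) \psd$ exactly when $\id_r + \widetilde A(x) \psd$, i.e.\ exactly when $x \in Q$, giving $Q = \setcond{x \in \R^n}{A(x) + \id \psd}$. The only real obstacle is Step 1: without the interior hypothesis the claim fails (for instance when $Q$ is a single point no representation of the stated form exists), so care is required to extract from it that $\ker M_0$ lies in the kernel of every coefficient matrix; everything after that is a routine change of basis and padding.
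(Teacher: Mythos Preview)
Your proof is correct and follows essentially the same route as the paper's: first establish that $\ker M(\zerovec)$ is contained in $\ker M(x)$ for every $x$ (the only place the interior hypothesis is used), then pass by congruence to a block where the constant term becomes the identity and pad back to size $k$. The only cosmetic difference is that the paper does the basis change and the normalization in a single congruence (choosing an invertible $U$ with $\transpose{U} M_0 U = \diag(\id_r,\zeromat)$), whereas you first pass to an orthonormal basis adapted to $W^{\perp}\oplus W$ and then apply a second congruence by $\widetilde M_0^{-1/2}$; the content is identical.
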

\begin{proof}
	We present the argument from \cite{HeltonVinnikov2007} for the sake of completeness.  Let $M(x)=S(x)+ T$, where $S : \R^n \to \S^k$ is a linear map and $T \in \S^k$. We first show that, for each $x \in \R^n$, the kernel of $T$ is a subspace of the kernel of $S(x)$. Fix an arbitrary $x$ and an arbitrary $u$ in the kernel of $T$. As the origin of $\R^n$ is in the interior of $Q$ the matrices $S(\pm \eps  x) + T = \pm \eps S(x) + T$ are positive semidefinite, for a sufficiently small $\eps>0$. In particular, both values $\transpose{u}(\pm \eps  S(x) u +  T) u$ are non-negative. Since $Tu =\zerovec$, we arrive at $\transpose{u} S(x) u = 0$. We have shown that for the positive-semidefinite matrix $\eps S(x)+ T$, one has $\transpose{u} (\eps S(x) + T) u = 0$. This means that $u$ is in the kernel of $\eps S(x)+ T$. Since $u$ is in the kernel of $T$, we conclude that $u$ is in the kernel of $S(x)$. 
	
	Since $\zerovec$ is in $Q$ the matrix $T$ is positive semidefinite. Thus, there exists an invertible matrix $U$ such that 
	\(
		\transpose{U} T U = \begin{smallpmatrix} \id_{r} & \zeromat & \\ \zeromat & \zeromat \end{smallpmatrix},
	\)
	where $r$ is the rank of $T$. The last $m-r$ columns of $U$ belong to the kernel of $T$ and so we have 
	\(
		\transpose{U} S(x) U = \begin{smallpmatrix} S'(x) & \zeromat & \\ \zeromat & \zeromat \end{smallpmatrix}
	\)
	for some linear map $S' : \R^n \to \S^r$. This shows that the condition $S(x) + T \in \S^m$ is equivalent to $S'(x) + \id \in \S^r$. The latter condition is equivalent to $A(x) + \id \in \S^k$ with a linear map $A : \R^n \to \S^k$ given by 
	\(
		A (x) = \begin{smallpmatrix} S'(x) & \zeromat & \\ \zeromat & \id \end{smallpmatrix}.
	\)
\end{proof}

\begin{thm}	\label{thmNormalized}
	Let $P \subseteq \R^d$ be a polytope with $\dim(P) \ge 1$. If $P$ has an extended formulation with $\ell$ LMIs of size $m$, then $P$ also has a normalized extended formulation with $\ell$ LMIs of size $m$. 
\end{thm}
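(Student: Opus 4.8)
The plan is to start from an arbitrary extended formulation $P=\phi(Q)$ of $P$ with $\ell$ LMIs of size $m$, say $Q=\setcond{x\in\R^n}{M(x)\in\S_+^k}$ with $M=\operatorname{diag}(M_1,\dots,M_\ell)$, $M_i:\R^n\to\S^m$, $\ell m=k$, and to bring it into normalized form in three successive reductions, each of which leaves the number $\ell$ of blocks and their common size $m$ untouched.

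First I would make the spectrahedron bounded and full-dimensional. Applying Lemma~\ref{lemBounded} with $C=Q$ produces an affine subspace $L\subseteq\R^n$ with $Q\cap L$ bounded and $\phi(Q\cap L)=P$; replacing $L$ by the affine hull $L':=\aff(Q\cap L)$ we still have $Q\cap L'=Q\cap L$, and now $Q\cap L'$ is a compact convex set with nonempty interior relative to $L'$, of dimension at least $\dim P\ge 1$. Choosing an affine bijection $\psi$ from $L'$ onto $\R^{n'}$ with $n':=\dim L'\ge 1$ and composing, the affine maps $\phi\circ\psi^{-1}$ and $M\circ\psi^{-1}$ --- the latter still block-diagonal with $\ell$ blocks of size $m$ --- form an extended formulation of $P$ with $\ell$ LMIs of size $m$ whose spectrahedron $\psi(Q\cap L')$ is compact and full-dimensional in $\R^{n'}$. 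So from now on I may assume that $Q$ itself is a compact convex subset of $\R^n$ with nonempty interior.

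Next I would fix the sandwiching condition. Lemma~\ref{lemSandwich} provides an affine bijection $\alpha:\R^n\to\R^n$ with $\ball^n\subseteq\alpha(Q)\subseteq n\ball^n$; replacing $Q$, $M$, $\phi$ by $\alpha(Q)$, $M\circ\alpha^{-1}$, $\phi\circ\alpha^{-1}$ (the first two again affine and block-diagonal with the same block data) yields an extended formulation with $\ell$ LMIs of size $m$ in which $\ball^n\subseteq Q\subseteq n\ball^n$. In particular $\zerovec$ lies in the interior of $Q$, and hence in the interior of each $Q_i:=\setcond{x\in\R^n}{M_i(x)\in\S_+^m}$ because $Q\subseteq Q_i$.

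Finally I would fix the condition $M(\zerovec)=\id$ without disturbing anything achieved so far. Applying the Helton--Vinnikov lemma stated above \emph{separately to each block}, I can rewrite $Q_i=\setcond{x\in\R^n}{A_i(x)+\id_m\in\S_+^m}$ with $A_i:\R^n\to\S^m$ linear, and then set $M'(x):=\operatorname{diag}\bigl(A_1(x)+\id_m,\dots,A_\ell(x)+\id_m\bigr)$. This $M'$ is affine, block-diagonal with $\ell$ blocks of size $m$, satisfies $M'(\zerovec)=\id_k$, and $\setcond{x\in\R^n}{M'(x)\in\S_+^k}=\bigcap_{i=1}^\ell Q_i=Q$, which is still trapped between $\ball^n$ and $n\ball^n$; together with $\phi$ this is the required normalized extended formulation. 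The only points that need genuine care are that the Helton--Vinnikov normalization must be carried out block by block (applying it to the full matrix $M$ would in general destroy the block-diagonal structure) and that the first reduction indeed delivers a full-dimensional spectrahedron, so that Lemma~\ref{lemSandwich} is applicable; I expect the dimension-reduction bookkeeping to be the most delicate part, though still routine.
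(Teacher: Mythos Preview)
Your proposal is correct and follows essentially the same approach as the paper: apply Lemma~\ref{lemBounded} to make the spectrahedron bounded, pass to its affine hull and use Lemma~\ref{lemSandwich} to sandwich it between $\ball^{n'}$ and $n'\ball^{n'}$, then apply the Helton--Vinnikov lemma block by block to achieve $M(\zerovec)=\id$. Your treatment is in fact slightly more careful than the paper's in two respects --- you make explicit the passage to the affine hull needed before invoking Lemma~\ref{lemSandwich}, and you point out why the Helton--Vinnikov normalization must be done blockwise --- but the argument is the same.
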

\begin{proof}
	Consider an arbitrary extended formulation \eqref{eqSemRep}. 	By Lemma~\ref{lemBounded} there exists an affine subspace $L$ of $\R^n$ such that the set $Q \cap L$ is bounded and satisfies $P =\phi(Q \cap L)$. Let $n' :=\dim(Q \cap L)$ and choose a set
	$Q' \subseteq \R^{n'}$ affinely isomorphic to $Q \cap L$. That is, for some affine map $\psi : \R^{n'} \to \R^n$ the set $Q'$ is bijectively mapped onto $Q \cap L$ by $\psi$. By Lemma~\ref{lemSandwich}, without loss of generality, we can choose $Q'$ appropriately so that the inclusions $\ball^{n'} \subseteq Q' \subseteq n' \ball^{n'}$ are fulfilled. We have $P=\phi(Q \cap L) = \phi(\psi(Q'))$, where $Q'$ is a spectrahedron given by 
	\[
		Q' = \setcond{x' \in \R^{n'}}{\psi(x') \in Q} = \setcond{x' \in \R^{n'}}{M(\psi(x')) \in  \S_+^k}.
	\]
	Now, assume that $M(x)$ has the block-diagonal structure \eqref{eqBlocks}. Then we can write $Q'$ as $Q'= Q'_1 \cap \dotsb \cap Q'_\ell$, where $Q'_i := \setcond{x' \in \R^{n'}}{M_i(\psi(x')) \in \S_+^k}$ for $i \in [\ell]$. 
	
	Since $\ball^{n'} \subseteq Q'$, the origin of $\R^{n'}$ is in the interior of each $Q'_i$. Application of Lemma~\ref{lemBounded} to $Q'_i$ yields the existence of a linear map $A_i : \R^{n'} \to \S_+^m$ with  $Q'_i = \setcond{x' \in \R^{n'}}{A_i(x') + \id \in \S_+^k}$. We have thus constructed a normalized extended formulation $P = \setcond{\phi(\psi(x'))}{x' \in \R^{n'}, A_1(x') + \id,\ldots,A_\ell(x')+ \id \in \S_+^k}$ of $P$ with $\ell$ LMIs of size $m$.
\end{proof}

    \section{Proof of the main theorem} 

\label{secMainProof}

We have already fixed the norm on $\R^d$ to be the Euclidean norm. We also use the following norms for a matrix $T \in \S^m$, a linear map $\phi : \R^n \to \R^d$, and a linear map $A : \R^n \to \S^m$: 
\begin{align*}
\|T\| & := \max_{x \in \ball^n} \|T x\|, 
& \|\phi\| & := \max_{x \in \ball^n} \|\phi(x)\|,
& \|A\| & := \max_{x \in \ball^n} \|A(x)\| .
\end{align*}
All three norms are the so-called operator norms.  It is well-known that $\|T\|$ is equal to the largest absolute value of an eigenvalue of $T$. This also shows that $\| A(x) \|$ is the largest absolute value of an eigenvalue of $A(x)$ among all $x \in \ball^n$. 

In what follows, for encoding normalized extended formulations we use the vector space $\cV^{\ell,m,n}_d$ of triples $(A,\phi,t)$ such that $A : \R^n \to \S^k$ and $\phi : \R^n \to \R^d$ are linear maps, $t \in \R^d$ and $ A $ consists of $ \ell $ blocks of size $ m \times m $ (thus $ k = \ell m $). Each such triple $(A,\phi,t)$ determines a subset $P=\setcond{\phi(x)+t}{x \in \R^n, A(x) + \id \in \S_+^k}$. Every normalized extended formulation with $\ell$ LMIs of size $m$ can be encoded using an appropriate choice of $(A,\phi,t) \in \cV^{\ell,m,n}_d$.

\begin{lem}
	\label{lemBounds}
	Let $P$ be a compact subset of $\R^d$ that has a normalized extended formulation 
	\[
	P= \setcond{\phi(x)+t}{x \in \R^n, \ A(x) +\id \in \S_+^k},
	\]
	with $(A,\phi,t) \in \cV^{\ell,m,n}_d$. Let $\rho>0$ be such that $P$ is contained in the ball $\rho \ball^d$. Then 
	\begin{align}
	\label{eqBounds}
	\|A\| & \le 1, & \|\phi \| & \le \rho,  & \|t\| & \le \rho, & &\text{and} & n &\le \ell m^2.
	\end{align}

	Furthermore, if $P'$ is another compact subset of $\rho \ball^n$ having a normalized extended formulation 
	\[
	P'= \setcond{\phi'(x)+t' \in \R^n}{x \in \R^n, \ A'(x)+ \id \in \S_+^k}
	\] with $(A',\phi',t') \in \cV_d^{\ell,m,n}$, then
	\begin{equation}
	\label{eqDistanceBound}
	\dist(P,P') \le \rho n^2 \|A-A'\| + n \|\phi - \phi'\| + \|t-t'\| 
	\end{equation}
\end{lem}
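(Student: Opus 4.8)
The plan is to extract each inequality in~\eqref{eqBounds} directly from the normalization assumptions $\ball^n \subseteq Q := \setcond{x}{A(x)+\id \in \S_+^k}$ and $Q \subseteq n\ball^n$, together with the containment $P \subseteq \rho\ball^d$. First I would bound $\|A\|$: for $x \in \ball^n$ we have $A(x)+\id \psd$, hence every eigenvalue of $A(x)$ is at least $-1$; applying this to $-x \in \ball^n$ as well gives that every eigenvalue of $A(x)$ lies in $[-1,1]$, so $\|A(x)\| \le 1$, and taking the max over $\ball^n$ yields $\|A\| \le 1$. For $\|\phi\|$ and $\|t\|$: since $\ball^n \subseteq Q$, the origin lies in $Q$, so $t = \phi(\zerovec)+t \in P \subseteq \rho\ball^d$, giving $\|t\| \le \rho$; and for any $x \in \ball^n$, $\phi(x)+t \in P \subseteq \rho\ball^d$, so $\|\phi(x)\| \le \|\phi(x)+t\| + \|t\| \le 2\rho$ — which is off by a factor of $2$, so I would instead argue more carefully: since $\pm x \in \ball^n \subseteq Q$, both $\phi(x)+t$ and $\phi(-x)+t = -\phi(x)+t$ lie in $\rho\ball^d$, and $\phi(x) = \tfrac12\bigl((\phi(x)+t) - (\phi(-x)+t)\bigr)$ has norm at most $\tfrac12(\rho+\rho) = \rho$, so $\|\phi\| \le \rho$.

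Next, the dimension bound $n \le \ell m^2$. The idea is that $Q$ has non-empty interior (it contains $\ball^n$), so $Q$ is full-dimensional in $\R^n$; on the other hand $Q$ is defined by the single LMI $A(x)+\id \psd$ with $A$ block-diagonal with $\ell$ blocks of size $m$. The affine hull of $Q$ being all of $\R^n$ forces the linear map $A$ to be injective: if $A(x) = \zeromat$ for some $x \ne \zerovec$, then $A(\mu x) + \id = \id \psd$ for all $\mu \in \R$, so the whole line $\R x$ lies in $Q$, contradicting $Q \subseteq n\ball^n$. Hence $\dim \R^n = \rank A \le \dim(\text{image of }A)$, and since $A$ maps into the space of block-diagonal matrices with $\ell$ blocks of size $m$, whose dimension is $\ell \cdot \binom{m+1}{2} \le \ell m^2$, we get $n \le \ell m^2$. (One has to be a little careful: injectivity of $A$ gives $n \le \dim \S^k$ restricted to the block pattern; the cleanest bound is $\ell\binom{m+1}{2} \le \ell m^2$.)

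Finally, the Hausdorff distance estimate~\eqref{eqDistanceBound}. Here I would fix a point $p = \phi(x)+t \in P$ with $x \in \ball^n$ (every point of $P$ is of this form, and by the normalization $Q \subseteq n\ball^n$ we may in fact take $x \in n\ball^n$ — this is where the factors of $n$ enter) and try to approximate it by a point of $P'$. The natural candidate is $\phi'(x)+t'$, but this requires $x$ to be feasible for the primed formulation, i.e. $A'(x)+\id \psd$, which need not hold. The fix is to use $\ball^n \subseteq Q'$: scale $x$ slightly, or rather, since $A(x)+\id \psd$ and $\|A - A'\| \cdot \|x\|$ is small, perturb to a nearby $x'$ with $A'(x')+\id \psd$; concretely $A'(x) + \id = (A(x)+\id) + (A'(x)-A(x))$, and $\|A'(x)-A(x)\| \le \|A-A'\|\,\|x\| \le n\|A-A'\|$, so $A'(x)+\id + n\|A-A'\|\id \psd$, i.e. $(1+n\|A-A'\|)^{-1}x$ — no: one normalizes by writing $A'\bigl(\tfrac{x}{1+n\|A-A'\|}\bigr) + \id \psd$ using $A'(\zerovec)=\zeromat$ and convexity, so $x/(1+n\|A-A'\|) \in Q'$ and $\phi'\bigl(x/(1+n\|A-A'\|)\bigr)+t' \in P'$. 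Then estimate
\[
\bigl\| (\phi(x)+t) - \bigl(\phi'(\tfrac{x}{1+n\|A-A'\|})+t'\bigr) \bigr\|
\le \|\phi(x)-\phi'(x)\| + \bigl\|\phi'(x) - \phi'(\tfrac{x}{1+n\|A-A'\|})\bigr\| + \|t-t'\|,
\]
where the first term is $\le n\|\phi-\phi'\|$, the last is $\|t-t'\|$, and the middle term is $\|\phi'\|$ times $\|x\|\cdot\frac{n\|A-A'\|}{1+n\|A-A'\|} \le \rho \cdot n \cdot n\|A-A'\| = \rho n^2\|A-A'\|$. Taking the supremum over $p \in P$ bounds one of the two one-sided Hausdorff distances, and the symmetric argument (swapping the roles of $P$ and $P'$) bounds the other, giving~\eqref{eqDistanceBound}.

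The main obstacle is the Hausdorff estimate: one must handle the fact that a feasible point for one formulation is not feasible for the other, and the scaling trick via $\ball^n \subseteq Q'$ is what resolves it, at the cost of the factor $n^2$ in front of $\|A-A'\|$ (one factor of $n$ from $\|x\| \le n$, another from the operator-norm comparison of $A$ and $A'$ on $n\ball^n$). Keeping track of exactly where each factor of $n$ comes from, and making sure the perturbed point is genuinely in $Q'$ (using $A'(\zerovec)=\zeromat$ and the convexity of $\S_+^k$), is the delicate part; everything else is a direct unwinding of the definitions.
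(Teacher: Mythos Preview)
Your proposal is correct and follows essentially the same route as the paper's proof: the same $\pm x$ averaging for $\|\phi\|\le\rho$ and $\|t\|\le\rho$, the same injectivity-of-$A$ argument (via unboundedness of a line in $Q$) for $n\le\ell m^2$, and---most importantly---the same scaling trick $x' := x/(1+n\|A-A'\|)$ to land in $Q'$, followed by the same three-term estimate (the paper also splits $\|\phi(x)-\phi'(x')\|\le \|\phi-\phi'\|\,\|x\| + \|\phi'\|\,\|x-x'\|$ and bounds $\|x-x'\|\le n^2\|A-A'\|$). The only cosmetic difference is that the paper verifies $x'\in Q'$ by a direct quadratic-form calculation $v^\top A'(x')v \ge -1$, whereas you phrase it via $A'(x)+\id + n\|A-A'\|\id \psd$ and then scale; these are the same computation.
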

\begin{proof}
	We define the spectrahedron $Q=\setcond{x\in \R^n}{A(x) + \id \in \S_+^k}$.
	For showing \eqref{eqBounds} consider an arbitrary $ x \in \ball^n $. Since $ \ball^n \subseteq Q $, we have $  A(\pm x) + \id \in \S_+^k $. Thus, for every eigenvalue $\lambda \in \R$ of $A(x)$ and a corresponding eigenvector $u$ of unit length, one gets $\pm \lambda + 1 =  \transpose{u} ( A(\pm x) +\id) u \ge 0$. Hence $|\lambda| \le 1$ and we have thus shown $\|A\| \le 1$. 
	Since $ \pm x \in \ball^n \subseteq Q$, one has $ \|\phi(x) + t\| \le \rho $ and $ \|\phi(-x) + t\|
	= \|t - \phi(x)\| \le \rho $. Thus, setting $ x = \zerovec $, one obtains $ \|t\| \le \rho $. We also have $ \|\phi(x)\| \le \tfrac{1}{2} \|t + \phi(x)\| + \tfrac{1}{2} \|t -
	\phi(x)\| \le \rho $, which yields $ \|\phi\| \le \rho $.
	
	In order to show $n \le \ell m^2$, assume that one had $n > \ell m^2$. Then $\dim(\R^n) > \dim(A(\R^n))$. Thus, $A$ maps a nonzero vector $x$ of $\R^n$ to a zero matrix. For such a vector $x$ one has $\alpha x \in Q$ for every $\alpha \in \R$. The latter contradicts the inclusion $Q \subseteq n \ball^n$ in the definition of the normalized extended formulation.
	
	It remains to show \eqref{eqDistanceBound}. We define the spectrahedron $Q' = \{ x\in \R^n : A'(x) + \id \in \S_+^k \} $ corresponding to $P'$. 
	Due to the symmetry between $P$ and $P'$ in the definition of the Hausdorff distance, it suffices to show that for every point $ y \in P $ there exists a point
	$ y' \in P' $ with
	\[
	\|y - y'\| \le \rho n^2 \|A-A'\| + n \|\phi - \phi'\| + \|t-t'\|.
	\]
	Let $y=\phi(x) + t$ with $ x \in Q $, and define $y'$ by $y' := \phi'(x')+t$, where  $ x' := \lambda x $ and $ \lambda := \frac{1}{1 + n \|A - A'\|} \in (0,1] $.  We show that $ x' \in Q' $ and so $y' \in P'$.
	We have to show that $ A'(x') + \id $ is positive semidefinite, i.e., that
	\begin{equation}
	\label{eqPSD1}
	\transpose{v} \big( A'(x') + \id \big) v \ge 0
	\end{equation}
	holds for every $ v \in \R^k $ with $ \|v\| = 1 $, which is equivalent to
	$
	\transpose{v} A'(x') v \ge -1.
	$
	Denoting $ D := A' - A $, for every $ v \in \R^k $ with $ \|v\| = 1 $ we indeed obtain
	\begin{align*}
	\transpose{v} A'(x') v
	& = \lambda \, \left( \transpose{v} D(x) v + \transpose{v} A(x) v \right) \\
	& \ge \lambda \, \left( \transpose{v} D(x) v - 1 \right) & & \text{(since $x \in Q$)}\\
	& \ge \lambda \, \left( - \|v\| \cdotp \|D(x) v\| - 1 \right) & & \text{(by Cauchy-Schwarz)}\\
	& \ge \lambda \, \left( - \|D(x)\| - 1 \right) \\
	& \ge \lambda \, \left( - \|D\| \cdotp \|x\| - 1 \right) \\
	& \ge \lambda \, \left( - n \|D\| - 1 \right) \\
	& = -1.
	\end{align*}
	Thus, $x' \in Q'$. We have 
	\begin{align*}
	\|y-y'\| & = \|(\phi(x) + t) - (\phi'(x') + t')\| \\
	& \le \|\phi(x) - \phi'(x')\| + \|t - t'\| \\
	& \le \|\phi - \phi'\| \cdotp \|x\| + \|\phi'\| \cdotp \|x - x'\| + \|t - t'\| \\
	& \le \|\phi - \phi'\| \cdotp n + \rho \|x-x'\| + \|t - t'\|, 
	\end{align*}
	where 
	\[
	\|x - x'\|
	= \left( 1 - \tfrac{1}{1 + n \|A - A'\|} \right) \|x\|
	\le \left( 1 - \tfrac{1}{1 + n \|A - A'\|} \right) n
	\le n^2 \|A - A'\|.
	\]
	This shows \eqref{eqDistanceBound}.
\end{proof}

\begin{proof}[Proof of Theorem~\ref{thmMain}]
	Let $N:=|\cP|$ and let $\cP=\{P_1,\ldots,P_N\}$.
	We consider an arbitrary $i \in [N]$. 
	Theorem~\ref{thmNormalized} implies that $P_i$ has a normalized extended formulation $P_i=\phi_i(Q_i) + t_i$, where $Q_i$ is the spectrahedron given by $Q_i= \{ x \in \R^{n_i} : A_i(x)+\id \in \S_+^m \} $ for some $n_i \in \N$ and $(A_i,\phi_i,t_i) \in \cV^{\ell,m,n_i}_d$. By Lemma~\ref{lemBounds}, $n_i \le \ell m^2$. Thus, for the sets $ W_n := \{ (A_i,\varphi_i,t_i) \mid i \in [N], \ n_i = n \} $ with $ n \in [\ell m^2] $, we have
	\begin{equation}
	\label{eqPartitionFamilyOfPolytopes}
	N = \sum_{n=1}^{\ell m^2} |W_n|
	\end{equation}
	We will now bound the cardinality of each $ W_n $.
	We fix $ n \in [\ell m ^2] $ and endow the vector space $\cV^{\ell,m,n}_d$ with the norm
	\[
	\|(A,\varphi,t)\| := \rho n^2 \|A \| + n \|\varphi \| + \|t\|.
	\]
	By inequality \eqref{eqDistanceBound} in Lemma~\ref{lemBounds},
	\(
	\|w - w'\| \ge \Delta
	\)
	holds for all $ w,w' \in W_n $ with $ w \ne w' $.
	This means that the open balls 
	\[
	\cB_w := \setcond{ v \in \cV^{\ell,m,n}_d }{ \|w - v\| < \Delta/2 }
	\]
	of the normed space $\cV^{\ell,m,n}_d$ with
	$ w \in W_n $ are pairwise disjoint.
	On the other hand, by inequalities  \eqref{eqBounds} in Lemma~\ref{lemBounds}, one has 
	\[
	\|w\| \le \rho n^2 \cdotp 1 + n \cdotp \rho + \rho \le 3 \rho n^2
	\]
	for every $ w \in W_n $.
	Thus, all balls $ B_w $ with $ w \in W_n $ are contained in the closed ball $ \cB := \{ v \in \cV^{\ell,m,n}_d : \|v\| \le 3 \rho n^2 + \Delta/2 \}
	$.
	Observe that for each $ w \in W_n $ the ratio of the volumes of $ \cB $ and $ \cB_w $ is $(\tfrac{3 \rho n^2 +
		\Delta/2}{\Delta/2})^{\dim(\cV^{\ell,m,n}_d)}$.
	The total volume of the disjoint balls $ \cB_w $ with $ w \in W_n $ is not larger than the volume of $ \cB $. The latter observation combined with the bound $\dim(\cV^{\ell,m,n}_d) = n \ell m(m+1)/2 + n d + d \le 3 d \ell^2 m^4 $ yields
	\[
	|W_n| \le \left( \tfrac{6 \rho n^2 + \Delta}{\Delta} \right)^{3 d \ell^2 m^4}.
	\]
	%
	The Hausdorff distance between two elements of $\cP$ is at most $2 \rho$, since every point of $\rho \ball^d$ is at distance at most $2 \rho$ to every other point of $\rho \ball^d$. Hence $\Delta \le 2 \rho$ and we obtain 
	\[
	|W_n| \le \left( \tfrac{6 \rho n^2 + 2 \rho}{\Delta} \right)^{3 d \ell^2 m^4} \le \left( \tfrac{8 \rho n^2}{\Delta} \right)^{3 d \ell^2 m^4} \le \left( \tfrac{8 \rho \ell^4 m^8}{\Delta} \right)^{3 d \ell^2 m^4}
	\]
	Using the notation $s= \ell m^2$, the latter bound can be written as $|W_n| \le \left( \tfrac{8 \rho s^4}{\Delta} \right)^{3 d s^2}.$

	In view of \eqref{eqPartitionFamilyOfPolytopes}, we get
	\[
	N \le s \left( \tfrac{8 \rho s^4}{\Delta} \right)^{3 d s^2} \le \left( \tfrac{8 \rho s^4}{\Delta} \right)^{4 d s^2}
	\]
	Taking the logarithm of the left and the right hand side, we arrive at $\log N \le 8 d s^2 ( 1 + \log (2\rho/\Delta) + \log (s^2))$. In the case $s^2> \log N $, \eqref{eqMainBound} is obviously fulfilled. In the case $s^2 \le \log N $, we use the estimate $\log(s^2) \le \log \log N $ and arrive at 
	$\log N \le 8 d s^2( 1 + \log (2 \rho / \Delta) + \log \log N)$, which shows that also in this case \eqref{eqMainBound} is fulfilled.
\end{proof}




\begin{rem}
	One can also consider more general extended formulations with $\ell$ semidefinite constraints of sizes $m_1,\ldots,m_\ell \in \N$, where $m_1,\ldots, m_\ell$ may not be equal. It is clear that Theorem~\ref{thmMain} can be generalized in a straightforward way to cover such more general formulations.
\end{rem}

\section{Applications}
\label{secApplications}

Given a finite set $X \subseteq \R^d$, we introduce the family $\cP(X) = \setcond{\conv(X')}{X' \subseteq X}$.
In particular, $\cP(\{0,1\}^d)$ is the set of all $ 0/1 $-polytopes in $ \R^d $.
\begin{cor}
    \label{corRandomZeroOnePolytopes}
    Let $ d \in \N $, $d \ge 3$, and let $ P $ be a random polytope uniformly distributed in $ \cP(\{0,1\}^d)$. Then one has
    \begin{align*}
	    \prob \left( \sxc(P) \le \frac{2^{d/4}}{3 \sqrt{d}}  \right)  & \le 2^{-2^{d-1}}
	    & &\text{and} & \prob \left( \xc(P) \le \frac{2^{d/2}}{9 d}  \right) & \le 2^{-2^{d-1}}.
    \end{align*}
\end{cor}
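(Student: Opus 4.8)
The plan is to feed suitable subfamilies of $\cP(\{0,1\}^d)$ into Theorem~\ref{thmMain} and to read the resulting lower bound on the \emph{maximum} extension complexity as an upper bound on the \emph{number} of $0/1$-polytopes with small extension complexity; dividing by $|\cP(\{0,1\}^d)|$ then gives the asserted probability estimates.

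I would first record the geometric data. Since every element of $\{0,1\}^d$ is a vertex of $[0,1]^d$, distinct subsets $X',X''\subseteq\{0,1\}^d$ satisfy $\conv(X')\ne\conv(X'')$; hence $|\cP(\{0,1\}^d)|=2^{2^d}$, and $P$ is simply the convex hull of a uniformly random subset of $\{0,1\}^d$. Every $0/1$-polytope lies in $[0,1]^d\subseteq\sqrt d\,\ball^d$, so I take $\rho:=\sqrt d$. For the minimum pairwise Hausdorff distance I claim $\dist(P,P')\ge 1/\sqrt d$ whenever $P\ne P'$ are $0/1$-polytopes: since $P\ne P'$ forces $\vertices(P)\ne\vertices(P')$, one of them, say $P$, has a vertex $v\in\{0,1\}^d$ with $v\notin\vertices(P')$, so $\vertices(P')\subseteq\{0,1\}^d\setminus\{v\}$ and $P'\subseteq\conv(\{0,1\}^d\setminus\{v\})$; applying the isometry of $[0,1]^d$ that sends $v$ to $\zerovec$, the image of $P'$ lies in $\conv(\{0,1\}^d\setminus\{\zerovec\})\subseteq\{x:\sprod{\onevec}{x}\ge1\}$, whose distance to $\zerovec$ is $1/\sqrt d$; therefore $\dist(P,P')\ge\dist(v,P')\ge 1/\sqrt d=:\Delta$, and $2\rho/\Delta=2d$.

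Now fix $\tau:=\tfrac{2^{d/4}}{3\sqrt d}$; I would bound $\#\{P\in\cP(\{0,1\}^d):\sxc(P)\le\tau\}$ (the argument for $\xc$ with $\tau':=\tfrac{2^{d/2}}{9d}$ is word-for-word parallel). The $1+2^d$ polytopes of dimension $\le0$ all have $\sxc=\xc=0$, so it suffices to bound $|\cP|$, where $\cP:=\{P:\dim P\ge1,\ \sxc(P)\le\tau\}$, and to check $(1+2^d)+|\cP|\le 2^{2^{d-1}}$. If $|\cP|\le1$ this is immediate from $2+2^d\le 2^{2^{d-1}}$ ($d\ge3$). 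If $|\cP|\ge2$, some polytope of dimension $\ge1$ has $\sxc\le\tau$, forcing $\tau\ge1$ (which one checks is equivalent to $d\ge14$); then each $P\in\cP$ has $\sxc(P)\le m:=\floor{\tau}$, i.e.\ an extended formulation with $\ell=1$ LMI of size $m$ (pad with an identity block), and $\ell^2m^4=m^4\le\tau^4=\tfrac{2^d}{81d^2}$. (In the $\xc$-case one instead takes $\ell:=\floor{\tau'}$ linear inequalities, $m=1$, and $\ell^2m^4=\ell^2\le\tau'^2=\tfrac{2^d}{81d^2}$ --- the constants $3$ and $9$ are chosen precisely so that both cases yield $\ell^2m^4\le\tfrac{2^d}{81d^2}$.) Theorem~\ref{thmMain} with $\rho=\sqrt d$ and $\Delta=1/\sqrt d$ now gives
\[
\frac{2^d}{81d^2}\ \ge\ \ell^2m^4\ \ge\ \frac{\log|\cP|}{8d\bigl(1+\log(2d)+\log\log|\cP|\bigr)},
\]
and since $|\cP|\le 2^{2^d}$ yields $\log\log|\cP|\le d$, this rearranges (using $2+\log d\le d$, valid for $d\ge4$) to $\log|\cP|\le\tfrac{16\cdot2^d}{81}<\tfrac{2^d}{5}$, whence $|\cP|<2^{2^d/5}$ and $(1+2^d)+|\cP|<2^{\,2^d/5+1}\le 2^{2^{d-1}}$.

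I do not anticipate a deep obstacle. The only genuinely geometric ingredient is the computation of $\rho$ and especially the Hausdorff-distance bound $\Delta=1/\sqrt d$, which I regard as the heart of the argument. The one point needing care is that Theorem~\ref{thmMain} concerns polytopes of dimension at least one, so the $1+2^d$ low-dimensional $0/1$-polytopes --- all of extension complexity $0$ --- must be counted separately; they are harmless because $1+2^d$ is negligible next to $2^{2^{d-1}}$ for $d\ge3$. Everything else is the routine numerical bookkeeping of the displayed inequalities, which is comfortable since the only interesting regime, $\tau\ge1$, already forces $d\ge14$.
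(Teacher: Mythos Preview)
Your proposal is correct and follows essentially the same route as the paper: choose $\rho=\sqrt d$ and $\Delta=1/\sqrt d$ via the separating hyperplane $\{\sum x_i\ge 1\}$, apply Theorem~\ref{thmMain} to the subfamily of $0/1$-polytopes of small extension complexity (using $\log\log|\cP|\le d$), count the $2^d+1$ low-dimensional polytopes separately, and treat the case where the threshold is below~$1$ by a direct estimate. The only differences are in the numerical bookkeeping (you simplify $1+\log(2d)+d\le 2d$ where the paper uses $\le 4d$, arriving at $|\cP|<2^{2^d/5}$ instead of the paper's $|\cP_{\ell,m}|\le 2^{32\ell^2m^4d^2}$), which is immaterial.
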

\begin{proof}
	We will apply Theorem~\ref{thmMain} for subfamilies of $\cP(\{0,1\}^d)$.  We can fix $\rho=\sqrt{d}$, since the maximum Euclidean norm of points from $\{0,1\}^d$ is $\sqrt{d}$. We can fix $\Delta=1/\sqrt{d}$, since $\dist(P_1,P_2) \ge
	\frac{1}{\sqrt{d}} $ holds for all non-empty polytopes $ P_1, P_2 \in \cP(\{0,1\}^d) $ with $ P_1 \ne P_2 $.
	To see this, consider a vertex of one of these two polytopes that does not belong to the other one.
	Without loss of generality, we assume that this vertex is the origin and that it belongs to $ P_1 $ but not to $ P_2 $.
	Then $\zerovec$ and  $ P_2 $ are separated by the hyperplane $ H := \{ (x_1,\ldots,x_d) \in \R^d \mid \sum_{i=1}^d x_i \ge 1
	\} $.
	The distance of $\zerovec$ to every point of $P_1$ is bounded from below by the distance of $\zerovec$ to $H$. Hence $\dist(P_1,P_2) \ge 1/\sqrt{d}$.
	
	For $ m,\ell \in \N $ let $\cP_{\ell,m}$ be a subfamily of $\cP(\{0,1\}^d)$ consisting of polytopes of dimension at least one which have an extended formulation with $\ell$ semidefinite constraints of size $m$. Since $|\cP_{\ell,m}| \le |\cP(\{0,1\}^d)| \le 2^{2^d}$, one has $\log \log |\cP| \le d$. Thus, Theorem~\ref{thmMain} yields 
	\[
		\ell^2 m^4 \ge \frac{\log |\cP_{\ell,m}|}{8d ( 1 + \log (2 d) + d)} \ge \frac{\log |\cP_{\ell,m}|}{32 d^2}.
	\]
	Hence $|\cP_{\ell,m}| \le 2^{32 \ell^2 m^4 d^2}$. Let $\cP'$ be the family of all $0/1$ polytopes $P'$ in $\R^d$ such that $P'$ is empty or a singleton. One has $|\cP'| =2^d + 1$. In view of  $|\cP_{\ell,m} \cup \cP'| \le 2^{33 \ell^2 m^4 d^2}$, we obtain
	\[
		\prob \left( P \in \cP_{\ell,m} \cup \cP'\right) \le 2^{33 \ell^2 m^4 d^2 - 2^d}.
	\]
	Hence
	\begin{align} \label{lmProbBound}
		& \prob \left( P \in \cP_{\ell,m} \cup \cP' \right) \le 2^{-2^{d-1}}  & & \text{if} & & 33 \ell^2 m^4 d^2 \le 2^{d-1}.
	\end{align}
	 The assertion for the semidefinite extension complexity is verified as follows. In the case  $\frac{2^{d/2}}{3 \sqrt{d}}<1$, we need to show that $\prob \left( \sxc(P)=0 \right)  = \frac{2^d+1}{2^{2^d}}$ is at most $2^{-2^{d-1}}$. The latter is true in view of $d \ge 3$. If $\frac{2^{d/2}}{3 \sqrt{d}} \ge 1$, the assertion follows from \eqref{lmProbBound} by setting $\ell=1$ and $m= \floor{\frac{2^{d/2}}{3 \sqrt{d}}}$. Analogously, to prove of the assertion for the linear extension complexity, we distinguish the two cases $\frac{2^{d/2}}{9d}<1$ and $\frac{2^{d/2}}{9 d} \ge 1$ and use \eqref{lmProbBound} with $\ell= \floor{\frac{2^{d/2}}{9d}}$ and $m=1$ in the second case. 
\end{proof}

\begin{cor}
    \label{corNGons}
    Let $ n \in \N $, $n \ge 2$, and let $\cP$ be the family of integral polygons $P \in \cP([n^2] \times [n^4])$ with $n$ vertices. Then one has 
    \begin{align*}
	    \max_{P \in \cP} \sxc(P) & \ge \frac{1}{4} \sqrt[4]{n}
	    & &\text{and} & \max_{P \in \cP} \xc(P) & \ge \frac{1}{15} \sqrt{n}.
    \end{align*}
\end{cor}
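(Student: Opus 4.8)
\textbf{Proof proposal for Corollary~\ref{corNGons}.}

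The plan is to apply Theorem~\ref{thmMain} not to all of $\cP$ but to the subfamily of $n$-gons inscribed in the parabola $y=x^2$. Set
\[
\cP_0 := \setcond{\conv\setcond{(i,i^2)}{i\in S}}{S\subseteq[n^2],\ |S|=n}.
\]
Since the points $(i,i^2)$ with $i\in[n^2]$ are in strictly convex position, each member of $\cP_0$ is an $n$-gon whose set of vertices recovers the index set $S$, and all of its vertices lie in $[n^2]\times[n^4]$; hence $\cP_0\subseteq\cP$ and $|\cP_0|=\binom{n^2}{n}$. It therefore suffices to bound $\max_{P\in\cP_0}\sxc(P)$ and $\max_{P\in\cP_0}\xc(P)$ from below, and for this I will exhibit parameters $\rho$ and $\Delta$ valid for $\cP_0$ and invoke Theorem~\ref{thmMain} with $d=2$, using its ``in particular'' part.

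Choosing $\rho$ is immediate: $\|(i,i^2)\|^2=i^2+i^4\le n^4+n^8\le 2n^8$ for all $i\in[n^2]$, so every member of $\cP_0$ is contained in $\rho\ball^2$ with $\rho:=\sqrt2\,n^4$. The estimate for $\Delta$ is the only substantive step. Let $P=\conv\setcond{(i,i^2)}{i\in S}$ and $P'=\conv\setcond{(i,i^2)}{i\in S'}$ be distinct members of $\cP_0$, and pick $j\in S\setminus S'$. Then $p:=(j,j^2)$ is a vertex of $P$, and since distinct points of the parabola are in strictly convex position, $p$ is not a convex combination of the points $(i,i^2)$ with $i\in S'$, so $p\notin P'$. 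I claim that $\dist(p,P')\ge 1/(\sqrt5\,n^2)$, which forces $\dist(P,P')\ge 1/(\sqrt5\,n^2)$ and lets us take $\Delta:=1/(\sqrt5\,n^2)$. To prove the claim, recall that the convex hull of finitely many points on the parabola is bounded below (in the $y$-coordinate) by the polygonal chain through them in order of abscissa, and above by the single chord joining the two extreme ones. If $j$ lies strictly between two consecutive elements $a<b$ of $S'$, let $H$ be the line $(a+b)x-y-ab=0$ carrying the lower edge of $P'$ through $(a,a^2)$ and $(b,b^2)$; if $j$ is smaller (resp.\ larger) than every element of $S'$, instead take $a=\min S'$, $b=\max S'$ and let $H$ carry the upper chord of $P'$. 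In every case $P'$ lies in one of the two closed half-planes bounded by $H$ while $p$ lies strictly in the other (or, when $|S'|=2$, one has $P'\subseteq H$ and $p\notin H$); the value of $(a+b)x-y-ab$ at $p$ equals $-(j-a)(j-b)$, which is a nonzero integer and hence has absolute value at least $1$; and the normal vector $(a+b,-1)$ of $H$ has length at most $\sqrt{(2n^2)^2+1}\le\sqrt5\,n^2$ (using $a,b\le n^2$). Therefore $\dist(p,P')\ge\dist(p,H)\ge 1/(\sqrt5\,n^2)$. I expect the main care is needed precisely here, in organizing the case distinction — the subcase where $j$ falls outside the abscissa range of $S'$, and the degenerate case $|S'|=2$.

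It remains to feed the relevant logarithmic quantities into the quantity $B$ of Theorem~\ref{thmMain}. From $\binom{n^2}{n}\ge(n^2/n)^n=n^n$ we get $\log|\cP_0|\ge n\log n$; from $\binom{n^2}{n}\le 2^{n^2}$ we get $\log\log|\cP_0|\le 2\log n$; and $2\rho/\Delta=2\sqrt{10}\,n^6\le 8n^6$ gives $\log(2\rho/\Delta)\le 3+6\log n$. With $d=2$ these bounds yield
\[
B \ \ge\ \frac{n\log n}{16\,(1+3+6\log n+2\log n)} \ =\ \frac{n\log n}{16\,(4+8\log n)} \ \ge\ \frac{n}{192},
\]
where the last inequality uses $\log n\ge 1$ (valid since $n\ge2$), so that $4+8\log n\le 12\log n$. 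Consequently, by the ``in particular'' part of Theorem~\ref{thmMain},
\[
\max_{P\in\cP}\sxc(P)\ \ge\ \max_{P\in\cP_0}\sxc(P)\ \ge\ \sqrt[4]{B}\ \ge\ \sqrt[4]{n/192}\ \ge\ \tfrac14\sqrt[4]{n}
\]
since $192\le 256=4^4$, and likewise $\max_{P\in\cP}\xc(P)\ge\sqrt{B}\ge\sqrt{n/192}\ge\tfrac1{15}\sqrt{n}$ since $192\le 225=15^2$. Everything in this last paragraph is routine bookkeeping; the one genuinely non-trivial ingredient of the whole argument is the lower bound on $\Delta$, i.e., the fact that distinct inscribed $n$-gons are Hausdorff-far apart.
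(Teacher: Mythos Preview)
Your proof is correct and follows essentially the same route as the paper: restrict to $n$-gons inscribed in the parabola with abscissae in $[n^2]$, bound $\rho$ and $\Delta$, and plug into Theorem~\ref{thmMain}. The only noteworthy difference is the separation argument for $\Delta$: the paper, having picked $t\in I\setminus J$, simply uses the fixed line through $p_{t-1}$ and $p_{t+1}$ (which separates $p_t$ from $P_J$ because every integer $j\ne t$ satisfies $(j-t)^2\ge1$), avoiding your case split on whether $j$ lies inside or outside the abscissa range of $S'$. Your version works too and yields a marginally better constant ($n/192$ versus the paper's $n/208$), but the paper's choice is a bit slicker.
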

\begin{proof}
    As in \cite{FioriniRT12}, we consider polytopes with vertices on the parabola $\setcond{p_t}{t \in \R}$, where $p_t=(t,t^2)$. We introduce $ s \in \N$ with $s \ge n$, which will be fixed later. 
    For every $ I \subseteq [s] $ let $ P_I := \conv (\{ p_t : t \in I \})
    $.
    We consider the subfamily $ \cP_{s,n} := \{ P_I : I \subseteq [s], \, |I| = n \} $ of $\cP$. 
    Defining $ \Delta := \frac{1}{3s} $, we claim that $ \dist(P_I,P_J) \ge \Delta $ holds for all nonempty $ I,J \subseteq [s]
    $ with $ I \ne J $.
    To see this, we may assume that there is some $ t \in I \setminus J $.
	Observe that the line 
    through the points $ p_{t-1} $ and $ p_{t+1}$ separates $p_t$ and $P_J$. The distance of $p_t$ to this line is a lower bound on $\dist(P_I,P_J)$. Thus, we get
    \[
        \dist(P_I,P_J) \ge \frac{1}{\sqrt{4 t^2 + 1}}
        \ge \frac{1}{\sqrt{4s^2 + 1}} \ge \frac{1}{3s} = \Delta,
    \]
    as claimed.

    Furthermore, every member in $ \cP_{\ell,s} $ is contained in $ 2s^2 \cdotp \ball^2 $.
    Thus, setting $ \rho := 2s^2 $, we apply Theorem~\ref{thmMain} to the family $\cP_{s,n}$. This yields that if every $P \in \cP_{s,n}$ can be represented by $\ell$ semidefinite constraints of size $m$, then 
    \[
	    \ell^2 m^4 \ge \frac{\log |\cP_{s,n}|}{16 (1 + \log (12  s^3) + \log \log |\cP_{s,n}|) } \ge \frac{\log |\cP_{s,n}|}{16 (5 + 3 \log ( s) + \log \log |\cP_{s,n}|) }.
    \]
    Recall that $ \cP_{s,n} $ has $ \binom{s}{n} $ members, where 
    \[
        \left(\frac{s}{n}\right)^n \le \binom{s}{n} \le s^n.
    \]
    This yields $\log \log |\cP_{s,n}| \le \log n + \log \log s$ and $\log |\cP_{s,n}| \ge n (\log s - \log n)$. Thus, 
    \[
	    \ell^2 m^4 \ge \frac{n (\log s - \log n)}{16 (5 + 3 \log ( s) + \log n + \log \log s)) }.
    \]
    It is clear, that for sufficiently large $s$, the right hand side of the latter inequality is of order $\Theta(n)$. In fact, setting $s=n^2$, we obtain 
    \[
	    \ell^2 m^4 \ge \frac{n \log n}{16 (5 + 7 \log n + \log \log n)} \ge \frac{n }{16 \cdot 13 }
    \]
    The assertions for the semidefinite and the linear extension complexities follow by setting $\ell=1, \ m = \max \setcond{\sxc(P)}{P \in \cP_{s,n}} $ and $\ell =\max \setcond{\xc(P)}{P \in \cP_{s,n}}, \ m=1$, respectively.
\end{proof}

    \section*{Linear Extension Complexities of 0/1-Polytopes}
\label{secUpperBound}
We recall some well-known and simple facts about the extension complexity. For every finite set $X$, one has $\xc(\conv(X)) \le |X|$. If a polytope $P$ is represented as $P = \conv(P_1 \cup \dotsb \cup P_\ell)$ using finitely many polytopes $P_1,\ldots,P_\ell$, then $\xc(P) \le \ell + \xc(P_1) + \cdots + \xc(P_\ell)$; see \cite{Balas79}. If $P$ and $Q$ are polytopes, then $\xc(P \times Q) \le \xc(P) + \xc(Q)$.

The proof of the following theorem is inspired by a classroom proof of Shannon's upper bound on sizes of
boolean circuits~\cite[Thm.~6]{Shannon49}, see also~\cite[Sec.~2]{KramerL84}.

\begin{thm}
	\label{thmUpperBound01}
    For every $ d \in \N $ with $d \ge 4$ and every $P \in \cP(\{0,1\}^d)$, one has
    \[
        \xc(P) \le 9 \frac{2^d}{d}.
    \]
\end{thm}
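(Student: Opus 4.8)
The plan is to adapt Shannon's counting argument for Boolean circuits. We split the $d$ coordinates of $\{0,1\}^d$ into a ``small'' block of size $a$ and a ``large'' block of size $b := d-a$, where $a$ will be of order $\log d$ and chosen at the end. The restriction of a $0/1$-polytope $P = \conv(V)$ to each fixed assignment $y$ of the large block is the convex hull of a subset of $\{0,1\}^a$, and there are only $2^{2^a}$ such subsets; by grouping the assignments $y$ according to which subset they induce, we will write $P$ as the convex hull of at most $2^{2^a}$ products of ``small'' polytopes, and then the three elementary properties of $\xc$ recalled above finish the estimate.

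In detail, identify $\{0,1\}^d$ with $\{0,1\}^a \times \{0,1\}^b$ and write points as $(x,y)$. For $y \in \{0,1\}^b$ set $V_y := \setcond{x \in \{0,1\}^a}{(x,y) \in V}$, let $\mathcal{W}$ be the set of pairwise distinct nonempty slices among the $V_y$, and for $W \in \mathcal{W}$ set $Y_W := \setcond{y \in \{0,1\}^b}{V_y = W}$. Since $V = \bigcup_{W \in \mathcal{W}} (W \times Y_W)$ and $\conv(W \times Y_W) = \conv(W) \times \conv(Y_W)$, we obtain $P = \conv\bigl( \bigcup_{W \in \mathcal{W}} \conv(W) \times \conv(Y_W) \bigr)$. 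Applying, in this order, the union bound $\xc(\conv(P_1 \cup \dots \cup P_\ell)) \le \ell + \sum_i \xc(P_i)$, the product bound, and $\xc(\conv(X)) \le |X|$, and using that the $Y_W$ are pairwise disjoint subsets of $\{0,1\}^b$ while $\mathcal{W}$ consists of nonempty subsets of $\{0,1\}^a$, we get
\[
	\xc(P) \le |\mathcal{W}| + \sum_{W \in \mathcal{W}} \bigl( |W| + |Y_W| \bigr) \le |\mathcal{W}|\,(1 + 2^a) + 2^b \le 2^{2^a}(1 + 2^a) + 2^{d-a}.
\]

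Finally I would choose $a := \floor{\log d} - 1$; since $d \ge 4$ this gives $1 \le a \le \log d - 1$ (so the split is non-trivial and $b \ge 1$) and moreover $d/4 < 2^a \le d/2$. Hence the first (``table'') term is at most $2^{d/2}(1 + d/2)$ and the second (``data'') term is at most $4\cdot 2^d/d$. A routine estimate gives $2^{d/2}(1 + d/2) \le 5\cdot 2^d/d$ for all $d \ge 4$: for large $d$ because the left-hand side is $o(2^d/d)$, and for the remaining small values by direct substitution (for instance $d = 4$ already yields $\xc(P) \le 4\cdot 3 + 8 = 20 \le 36$). Summing the two contributions gives $\xc(P) \le 9\cdot 2^d/d$. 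Degenerate $P$ (the empty set or a single point) have $\xc(P) = 0$ and need no argument.

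I expect the only genuine difficulty to be this last step: pinning down $a$ to the correct order $\log d$ and checking that the resulting constant is at most $9$ \emph{uniformly} over all $d \ge 4$ — the delicate regime being small $d$, where the doubly exponential ``table'' term $2^{2^a}$ has not yet been swamped by the ``data'' term $2^{d-a}$. Everything preceding it is bookkeeping with the three standard facts about $\xc$ stated at the beginning of the section.
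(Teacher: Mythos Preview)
Your proof is correct and follows essentially the same Shannon-style decomposition as the paper: split the coordinates into a small block of size roughly $\log d$ and a large block, group the large-block assignments by which small-block slice they induce, and apply the union, product, and vertex-count bounds for $\xc$. The only cosmetic differences are that the paper chooses the small block size $s := \lfloor \log_2(d/4) \rfloor$ (one less than your $a = \lfloor \log d \rfloor - 1$) and bounds $2^{2^s}(2^s+1) \le 2^{2\cdot 2^s} \le 2^{d/2}$, which makes the final numerics slightly cleaner ($\xc(P) \le \tfrac{8}{d}2^d + 2^{d/2} \le \tfrac{9}{d}2^d$ without any case-checking).
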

\begin{proof}
    Let $V \subseteq \{0,1\}^d$ and $P=\conv(V)$. 
    We consider $s \in \{0,\ldots,d\}$, which will be fixed later.
    Points of $\{0,1\}^d$ can be represented as $(x,y)$ with $x \in \{0,1\}^{d-s}$ and $y \in \{0,1\}^s$.
    Using this representation, one can group points $(x,y) \in V$ into disjoint sets according to the choice of $x$.
    That is, $V$ is the disjoint union of the sets $\{x\} \times Y_x$ with $x \in \{0,1\}^{d-s}$, where $Y_x :=
    \setcond{y \in \{0,1\}^s}{(x,y) \in V}$.
    Note that one may have $Y_x=Y_{x'}$ for some $x \ne x'$.
    Let $\ell=2^{2^s}$ and let $Y_1,\ldots,Y_\ell$ be the sequence of all vertex sets of $0/1$-polytopes in $\R^s$.
    We now group the points $x \in \{0,1\}^{d-s}$  according to $Y_x$.
    That is, $\{0,1\}^{d-s}$ is the disjoint union of sets $X_1,\ldots,X_\ell$, where $X_i := \setcond{x \in
    \{0,1\}^{d-k}}{Y_x = Y_i}$ for $i \in [\ell]$.
    By construction, one has $V = \bigcup_{i=1}^\ell X_i \times Y_i$.
    Hence, $P = \conv(P_1 \cup \dotsb \cup P_\ell)$, where $P_i := \conv(X_i \times Y_i)$ for $i \in [\ell]$.
    This yields $\xc(P) \le \ell + \xc(P_1) + \cdots + \xc(P_\ell)$, where $\xc(P_i) \le \xc(\conv(X_i)) +
    \xc(\conv(Y_i)) \le |X_i| + |Y_i|$ holds for each $i \in [\ell]$.
    Summarizing, we get $\xc(P) \le \ell + \sum_{i=1}^\ell |X_i| + \sum_{i=1}^\ell |Y_i|$, where $\sum_{i=1}^\ell |X_i|
    \le 2^{d-s}$, since $X_1,\ldots,X_\ell$ are pairwise disjoint subsets of $\{0,1\}^{d-s}$, and $|Y_i| \le 2^s$.
    Consequently,
    \[
        \xc(P) \le  2^{d-s} + \ell (2^s + 1) = 2^{d-s} + 2^{2^s} (2^s+ 1) \le 2^{d-s} + 2^{2 \cdot 2^s}.
    \]
    Thus, setting $ s := \lfloor \log_2(d/4) \rfloor $ we obtain
    \begin{align*}
            \xc(P) & \le 2^{d- \log_2(d/4)+1} + 2^{\frac{d}{2}} = \frac{8}{d} 2^d + 2^{\frac{d}{2}}.
    \end{align*}
    For $d \ge 4$ one has $2^{\frac{d}{2}} \le \frac{1}{d} 2^d$ and so $\xc(P) \le \frac{9}{d} 2^d$.
\end{proof}

    \bibliographystyle{amsplain}
    \bibliography{references}
\end{document}